
\documentclass[12pt,a4paper,twoside]{article}
\frenchspacing

\oddsidemargin=1cm
\setlength{\textwidth}{6.5in}
\addtolength{\voffset}{50pt}

\evensidemargin=1cm
\setlength{\textwidth}{6.5in}
\addtolength{\voffset}{50pt}

\usepackage{fancyhdr}

\pagestyle{fancy}

\fancyhead{}

\fancyhead[CO]{Random walks conditioned to stay in Weyl chambers of type C and D}

\fancyhead[CE]{Wolfgang K\"onig and Patrick Schmid}

\usepackage{latexsym}
\usepackage{amsmath}
\usepackage{verbatim}
\usepackage{amsthm}
\usepackage{amssymb}
\parindent0mm
\def\R{{\ifmmode{\rm I}\mkern-4mu{\rm R}
\else\leavevmode\hbox{j}\kern-.17em\hbox{R}\fi}}
\def\N{{\ifmmode{\rm I}\mkern-3.5mu{\rm N}
\else\leavevmode\hbox{j}\kern-.16em \hbox{N}\fi} }
\def\C{\ifmmode{{\rm C}\mkern-15mu{\phantom{\rm t}\vrule}}\mkern10mu
\else\leavevmode\hbox{C}\kern-.5em\hbox{j}\kern.3em\fi}

\def\Q{{\ifmmode{\rm I}\mkern-7.5mu{\rm Q}
\else\leavevmode\hbox{j}\kern-.17em\hbox{Q}\fi}}

\renewcommand{\d}{{\rm d}}
\renewcommand{\phi}{\varphi}

\renewcommand{\epsilon}{\varepsilon}

\renewcommand{\P}{\mathbb{P}}
\newcommand{\E}{\mathbb{E}}
\newcounter{numerator}%

\setlength{\textheight}{23cm} \setlength{\textwidth}{16.5cm}
\setlength{\parindent}{0mm} \setlength{\parskip}{0mm}
\setlength{\topmargin}{-3mm} \hoffset-10mm \voffset-10mm
\newtheorem{theorem}{Theorem}[section]

\newtheorem{proposition}[theorem]{Proposition}
\newtheorem{lemma}[theorem]{Lemma}

 \setcounter{secnumdepth}{5}

\parindent=0mm
\parskip=0.33\baselineskip

\begin{document}

\title{Random walks conditioned to stay in Weyl chambers\\ of type C and D}

\author{Wolfgang K\"onig and Patrick Schmid}

\date{November 3, 2009}

\maketitle

\abstract{We construct the conditional versions of a multidimensional random
walk given that it does not leave the Weyl chambers of type C and of type D,
respectively, in terms of a Doob $h$-transform. Furthermore, we prove functional
limit theorems for the rescaled random walks. This is an extension of recent
work by Eichelsbacher and K\"onig who studied the analogous conditioning for the
Weyl chamber of type A. Our proof follows recent work by Denisov and Wachtel who
used martingale properties and a strong approximation of random walks by
Brownian motion. Therefore, we are able to keep minimal moment assumptions.
Finally, we present an alternate function that is amenable to an $h$-transform
in the Weyl chamber of type C.}

\section{Introduction}\label{sec-Intro}
In his classical work \cite{Dys} Dyson established a connection between
dynamical versions of random matrices and non-colliding random particle systems.
Indeed, the eigenvalue process of a $k\times k$ Hermitian Brownian motion has
the same distribution as the evolution of $k$ independent standard Brownian
motions conditioned never to collide (which means that they are in the same
order at all times). This process can also be characterised by saying that a
$k$-dimensional Brownian motion is conditioned on never leaving the Weyl chamber
of type A, $W^{\rm A} = \{x = (x_1, \ldots, x_k) \in \mathbb{R}^k \colon x_1 <
\ldots < x_k \}$. 

This conditional process, called {\it Dyson's Brownian motion}, attracted the
interest of various researchers. Several discrete versions were considered.
Recently, Eichelsbacher and K\"onig \cite{EiKoe} constructed, in great
generality, the analogous random walk version, i.e., the conditional version of
a  random walk on $\mathbb{R}^k$ given that it never leaves $W^{\rm A}$. This
result and its proof were recently improved by Denisov and Wachtel
\cite{DenWac}. It is the aim of this paper to extend their analysis to the two
cases of the Weyl chambers of type C and D, see Section~\ref{sec-CandD}.

Let us first describe the random walk version for the type-A chamber. To fix
notation, let $S(n)= (S_1(n), \ldots, S_k(n))$ denote the position of a random
walk in $\mathbb{R}^k$ started at $x\in\mathbb{R}^k$ after $n$ steps with
components $S_j(n) = x_j + \xi_j^{(1)} + \ldots + \xi_j^{(n)}, 1\leq j \leq k$,
where $\{\xi_j^{(i)}\colon 1\leq j \leq k, i\in\N \}$ is a family of independent
identically distributed random variables. In particular, $S(0) = x$. We write
$\P_x$ and $\E_x$ for the corresponding probability measure and expectation.

Actually one can understand conditioning to never leave $W^{\rm A}$ in two ways.
If $\tau_x^{\rm A} = \inf\{n\in\N_0\colon S(n) \notin W^A \}$ denotes the exit
time from $W^{\rm A}$, then on the one hand one can mean the conditional
distribution of the path given the event $\{\tau_x^{\rm A} > m\}$ asymptotically
as $m$ grows to infinity, that is, 
$$
\widehat{\mathbb{P}}_x(S(n) \in \d y) = \lim_{m\rightarrow\infty}
\mathbb{P}_x(S(n)\in \d y\mid \tau_x^{\rm A} > m), \qquad x,y\in W^{\rm A}.
$$
On the other hand, one can make a change of measure by Doob's $h$-transform
\cite{Do}. Necessary for this procedure is to find a function $h$ which is
strictly positive on $W^{\rm A}$ and regular for the restriction of the
transition kernel to $W^{\rm A}$, i. e., 
$$
\mathbb{E}_x[h(S(1)); \tau_x^{\rm A} > 1] = h(x), \qquad x\in W^{\rm A}.
$$ 
Then a new probability transition function on $W^{\rm A}$ is defined by
$$
\widehat{\mathbb{P}}_x^{(h)}(S(n) \in \d y) = \mathbb{P}_x(S(n) \in \d y;
\tau_x^{\rm A} > n)\frac{h(y)}{h(x)}, \qquad x,y\in W^{\rm A}.
$$
The corresponding Markov chain is called the $h$-transform on $W^{\rm A}$. A
priori there may be more than one function $h$ amenable to this procedure.
However, if a positive regular function $h$ governs the upper tails of
$\tau_x^{\rm A}$, i.e., $\mathbb{P}(\tau_x^{\rm A} > n) \sim C_1 h(x) n^{-c_2}$
as $n\to\infty$ for some $C_1, c_2 >0$ for any $x\in W^{\rm A}$, 
then the two above constructions lead to the same process. Indeed, by the Markov
property one obtains in the limit $m\to\infty$
$$
\begin{aligned}
\mathbb{P}_x(S(n) \in \d y \mid \tau_x^{\rm A} >m) & =  \mathbb{P}_x(S(n)\in \d
y; \tau_x^{\rm A} > n)\frac{\mathbb{P}(\tau_y^{\rm A} > m -
n)}{\mathbb{P}(\tau_x^{\rm A} > m)} \\
 & \rightarrow \mathbb{P}_x(S(n)\in \d y; \tau_x^{\rm A} > n)\frac{h(y)}{h(x)}.
\end{aligned}
$$

Eichelsbacher and K\"onig succeeded in finding a positive regular function
$V^{\rm A}$ which yields this coincidence:
\begin{equation}\label{VAdef}
V^{\rm A}(x) = h^{\rm A}(x) - \mathbb{E}_x[h^{\rm A}(S(\tau_x^{\rm A}))], \qquad
x\in W^{\rm A},
\end{equation}
with $h^{\rm A}$ the {\it Vandermonde determinant}
$$
h^{\rm A}(x) = \prod_{1\leq i < j \leq k}(x_j - x_i) =
\det\left((x_i^{j-1})_{i,j\in \{1,\dots, k\}}\right).
$$
It should be noted that $h^{\rm A}$ is, up to a multiplicative positive
constant, the unique harmonic function that is positive on the interior of
$W^{\rm A}$ and vanishes on the boundary. In potential theoretic terms, this is
expressed by saying that $h^{\rm A}$ is the {\it r\'{e}duite} of $W^{\rm A}$. 
Furthermore, Eichelsbacher and K\"onig showed that the rescaled random walk
weakly converges to Dyson's Brownian motion. 

It is not easy to see that $V^{\rm A}$ is strictly positive on $W^{\rm A}$, and
it is surprisingly difficult to prove that $V^{\rm A}$ is well-defined, i.e.,
that $h^{\rm A}(S(\tau_x^{\rm A}))$ is integrable. The approach in \cite{EiKoe}
is based on the discrete analogue of the Karlin-McGregor formula \cite{KarMc} 
for random walks and an application of a local central limit theorem. By
repeated use of the H\"older inequality, Eichelsbacher and K\"onig lose track of
minimal moment assumptions: they need the finiteness of moments $\mathbb{E}[|
\xi_j^{(i)} |^r]$ with $r > ck^3$. Denisov and Wachtel \cite{DenWac} improve
their results by showing that the minimal moment requirement is actually $r =
k-1$ for $k > 3$. Since the $k$-dimensional Vandermonde determinant is a polynomial 
which has in each variable at most order $k-1$, at least moments of order $k-1$ are necessary. 
Hence Denisov and Wachtel's moment requirement is indeed minimal. For $k=3$ they need 
higher moments since their approach is based on a strong coupling which will be explained later.

A conditional version on never leaving $W^{\rm A}$ under less integrability has
not been constructed yet, and it is unclear how to do that. This is a bit
surprising since the question of leaving $W^{\rm A}$ or not has a priori nothing
to do with moments.

The case $k=2$ has already been extensively treated in the literature if one
notes that staying in order for two walkers can be translated to staying
positive for a single walker. See for example Bertoin and Doney \cite{BeDo}.

The paper is organised as follows. We present our main results in
Section~\ref{sec-CandD}, and the proofs are given in Section~\ref{sec-proofs}.
In the final Section~\ref{sec-alternate}, we discuss an alternate $h$-transform
for the restriction of the walk to the Weyl chamber of type C.

\section{Extension to Weyl chambers C and D}\label{sec-CandD}

A natural extension of the conditioned random walk setting just described is the
one to different Weyl chambers. They arise in Lie theory as the set of orbits of
the adjoint action on a Lie algebra or conjugation under the associated Lie
group and first appeared in connection with Brownian motion in a work by
Grabiner \cite{Gra}. There he considered Weyl chambers of type A, B, C, D, but
one should mention that the Weyl chambers B and C are actually identical. Those
of type C and D are defined as follows:
\begin{eqnarray*}
W^{\rm C} & = & \{x = (x_1, \ldots, x_k) \in \mathbb{R}^k \colon 0 < x_1 <
\ldots < x_k \}, \\
W^{\rm D} & = & \{x = (x_1, \ldots, x_k) \in \mathbb{R}^k \colon |x_1| < x_2 <
\ldots < x_k \}. 
\end{eqnarray*}
As already mentioned, the Weyl chamber of type A imposes a strict order. For
type C, an additional wall at zero occurs, and for D the mirror image of $x_1$
is incorporated into the order. It is important that these chambers are also
equipped with respective r\'{e}duites: 
$$h^{\rm D}(x) = \prod_{1\leq i < j \leq k}\left(x_j^2 - x_i^2\right) \qquad
\mbox{and} \qquad h^{\rm C}(x) = h^{\rm D}(x) \prod_{i=1}^k x_i.$$
As we can handle the two cases simultaneously, we will write Z for C or D. Of
course we need corresponding exit times
$$
\tau_x^{\rm Z} = \inf\{n\in\N_0\colon S(n) \notin W^{\rm Z} \}.
$$
One of the most important objects of this paper is the function
\begin{equation}\label{VZdef}
V^{\rm Z}(x) = h^{\rm Z}(x) - \mathbb{E}_x[h^{\rm Z}(S(\tau_x^{\rm Z}))], \qquad
x\in W^{\rm Z};
\end{equation}
its role will turn out to be analogous to the role of $V^{\rm A}$ for $W^{\rm
A}$. 

Let us formulate our assumptions on the random walk $(S(n))_{n\in\N_0}$, which are supposed for the
results of this section:
\newline
\newline
{\bf Moment Assumption (MA)}: $\mathbb{E}[|\xi_j^{(i)}|^{(r^Z)}] < \infty$,
where $r^{\rm C} = 2k-1$ and $ r^{\rm D} = 2k-2$ if $k\geq 3$, and $r^{\rm C} =
3$ and $r^{\rm D}> 2$ arbitrary in case $k=2$.
\newline
\newline
{\bf Symmetry Assumption (SA)}: $\mathbb{E}[(\xi_i^{(j)})^{r}] = 0$ for any odd
integer $r\leq r^{\rm Z}$.
\newline
\newline
{\bf Normalization Assumption (NA)}: $\mathbb{E}[(\xi_i^{(j)})^2] = 1$.
\newline
\newline
As one again sees from the definition of $h^{\rm Z}$, the moment requirements
are indeed minimal for the integrability of $h^{\rm Z}(S(n))$ in the cases
$k\geq 3$. In the case $k=2$ we need for ${\rm Z}={\rm D}$ some higher power for yet another
application of the strong coupling, since second moments do not suffice. The
assumption (SA) seems somewhat unnatural, but it will become clearer in the
proof of Proposition~\ref{mar} why we need it. The normalization assumption is
just for convenience. 

First we make an interesting observation about a martingale property:
\begin{proposition}\label{mar}
The function $h^{\rm Z}$ is regular for $(S(n))_{n\in\N_0}$, i.e., for
any $x \in \mathbb{R}^k$ we have $\mathbb{E}_x[h^{\rm Z}(S(1))] = h^{\rm Z}(x)$.
Thus, $(h^{\rm Z}(S(n)))_{n\in\N_0}$ is a martingale for any $x \in
\mathbb{R}^k$.
\end{proposition}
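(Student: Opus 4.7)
The plan is to express $h^{\rm Z}$ as a polynomial determinant, push the expectation inside the determinant using the independence of the increment coordinates, and then use (SA) to show that the resulting matrix is column-equivalent to the original one.

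First I would rewrite both r\'eduites in Vandermonde form. The substitution $y_i = x_i^2$ in the classical identity $\prod_{i<j}(y_j - y_i) = \det(y_i^{j-1})_{i,j}$ gives
\begin{equation*}
h^{\rm D}(x) = \det\bigl(x_i^{2(j-1)}\bigr)_{1 \leq i,j \leq k},
\end{equation*}
and pulling the factor $\prod_i x_i$ into row $i$ yields
\begin{equation*}
h^{\rm C}(x) = \det\bigl(x_i^{2j-1}\bigr)_{1 \leq i,j \leq k}.
\end{equation*}
So we may write $h^{\rm Z}(x) = \det(p_j(x_i))_{i,j}$ with monomials $p_j(y) = y^{d_j}$, where $d_j = 2(j-1)$ for Z = D and $d_j = 2j-1$ for Z = C. In both cases $d_j \leq r^{\rm Z}$ for $1 \leq j \leq k$.

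Second, since $\xi_1^{(1)},\ldots,\xi_k^{(1)}$ are independent and the $i$-th row of the matrix $(p_j(x_i + \xi_i^{(1)}))_{i,j}$ depends only on $\xi_i^{(1)}$, expanding the determinant over the symmetric group and applying independence row by row gives
\begin{equation*}
\mathbb{E}_x\bigl[h^{\rm Z}(S(1))\bigr] = \det\bigl(\widetilde p_j(x_i)\bigr)_{i,j}, \qquad \widetilde p_j(y) := \mathbb{E}\bigl[(y+\xi)^{d_j}\bigr] = \sum_{\ell=0}^{d_j}\binom{d_j}{\ell}\mathbb{E}[\xi^{\ell}]\, y^{d_j-\ell},
\end{equation*}
where $\xi$ is a generic copy of $\xi_1^{(1)}$; (MA) is used here only for integrability. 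Now (SA) applies: because $d_j \leq r^{\rm Z}$, every odd $\ell \in \{1,\ldots,d_j\}$ satisfies $\mathbb{E}[\xi^\ell] = 0$, so only even $\ell$ contribute. Consequently $\widetilde p_j(y) - p_j(y)$ is a linear combination of the monomials $y^{d_j-\ell}$ with $\ell$ even and positive; all such monomials have the same parity as $d_j$ (all even for D, all odd for C) and degrees at most $d_j - 2 = d_{j-1}$, so they lie in the span of $\{p_1,\ldots,p_{j-1}\}$.

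Finally, column operations that subtract appropriate multiples of columns $1,\ldots,j-1$ from column $j$ reduce $(\widetilde p_j(x_i))_{i,j}$ to $(p_j(x_i))_{i,j}$ without changing the determinant, proving $\mathbb{E}_x[h^{\rm Z}(S(1))] = h^{\rm Z}(x)$; the martingale property of $(h^{\rm Z}(S(n)))_n$ then follows from the Markov property applied at each time $n$. The main subtlety, and what forces (SA) to take the form stated, is this parity bookkeeping: the triangular column reduction succeeds only because $\widetilde p_j - p_j$ stays inside the linear span of earlier $p_i$'s, which requires all "wrong-parity" monomials in the binomial expansion to vanish. That is exactly the content of (SA), while (NA) only fixes a convenient scale.
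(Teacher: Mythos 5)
Your proof is correct, and it takes a genuinely different route from the paper. The paper proves Proposition \ref{mar} by induction on $k$: it Laplace-expands the determinant $h^{\rm Z}$ along the last row, conditions on the single coordinate $y_m$, invokes the inductive hypothesis for the $(k-1)$-dimensional minor under the (exchangeable) conditional law, and then uses (SA) plus a second Laplace expansion to kill the cross terms. You instead expand the determinant over the symmetric group, use the \emph{independence} of the coordinate increments to split $\mathbb{E}\big[\prod_i p_{\sigma(i)}(x_i+\xi_i)\big]$ into a product of one-dimensional expectations, and then perform a triangular column reduction using that $\widetilde p_j - p_j$ lies in the span of $\{p_1,\dots,p_{j-1}\}$ by the parity argument. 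Your parity bookkeeping is right in both cases: the surviving exponents $d_j-\ell$ with $\ell\geq 2$ even have the same parity as $d_j$ and are bounded by $d_{j-1}$, so they are exactly monomials from earlier columns, and the leading coefficient of column $j$ remains $1$. What your argument buys is a short, non-inductive, one-shot computation; what it costs is generality. The paper explicitly remarks right after the proposition that its proof uses only exchangeability of the step distribution, not independence of the components, whereas your step ``push $\mathbb{E}$ inside the determinant to get $\det(\widetilde p_j(x_i))$'' is precisely where independence is used in an essential way. In the paper's stated setting (independent coordinates) both proofs apply; if one wanted the broader exchangeable case, your argument would not go through without modification.
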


The proof uses the exchangeability of the step distribution of the random walk $(S(n))_{n\in\N_0}$ only, not the independence of the components. The case ${\rm Z}={\rm A}$ was treated in \cite{OCKRo}. Two important properties of $V^{\rm
Z}$ are that this function is well-defined and strictly positive on $W^{\rm Z}$.
We combine these properties with some results that are of interest in themselves: 
\begin{proposition}\label{posfin}
\begin{enumerate}
\item[a)] $V^{\rm Z}(x) = \lim_{n\rightarrow\infty} \mathbb{E}_x[h^{\rm
Z}(S(n)); \tau_x^{\rm Z} > n]$ for all $x\in W^{\rm Z}$;
\item[b)] $V^{\rm Z}$ ist monotone in the sense that $V^{\rm Z}(x) \leq V^{\rm
Z}(y)$ if $x_j - x_{j-1} \leq y_j - y_{j-1}$ for $2\leq j \leq k$
and additionally either $x_1 \leq y_1$ (Z=C) or $x_1 + x_2 \leq y_1 + y_2$
(Z=D);
\item[c)] $V^{\rm Z}(x) \sim h^{\rm Z}(x)$ in the limit $\inf_{2\leq j \leq
k}(x_j - x_{j-1})\rightarrow\infty$ together with
$x_1\rightarrow\infty$ ($Z=C$) or $(x_1 + x_2)\rightarrow\infty$ ($Z=D$)
respectively;
\item[d)] there is c positive such that $V^{\rm Z}(x) \leq c \cdot h_2^{\rm Z}(x)$ for
all $x\in W^{\rm Z}$, with
$h_t^{\rm D}(x) = \prod_{1\leq i < j \leq k}(t + | x_j - x_i |)(t + | x_j + x_i
|)$ and
$h_t^{\rm C}(x) = h_t^{\rm D}(x)\prod_{i=1}^k (t + | x_i |)$;
\item[e)] $V^{\rm Z}(x) > 0$ for all $x \in W^{\rm Z}$.
\end{enumerate}
\end{proposition}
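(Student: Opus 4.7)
The plan is to prove the five parts of Proposition~\ref{posfin} in the order (a), (d), (b), (c), (e), with (a) doing the heaviest lifting.

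For (a), the starting point is the optional stopping theorem applied to the martingale $(h^{\rm Z}(S(n)))_{n\in\N_0}$ from Proposition~\ref{mar} at the bounded stopping time $n\wedge\tau_x^{\rm Z}$, which yields for every $n\in\N$
$$
h^{\rm Z}(x) \;=\; \mathbb{E}_x[h^{\rm Z}(S(n));\tau_x^{\rm Z}>n] \;+\; \mathbb{E}_x[h^{\rm Z}(S(\tau_x^{\rm Z}));\tau_x^{\rm Z}\le n].
$$
The first summand is non-negative, since $h^{\rm Z}\ge 0$ on $W^{\rm Z}$. To pass to the limit in the second summand by dominated convergence, one must establish the integrability $\mathbb{E}_x[|h^{\rm Z}(S(\tau_x^{\rm Z}))|]<\infty$; this I expect to be the main technical obstacle, since $h^{\rm Z}$ is a polynomial of total degree exactly $r^{\rm Z}$ and (MA) furnishes moments of precisely that order. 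The idea is to expand $h^{\rm Z}(S(m))=h^{\rm Z}(S(m-1)+\xi^{(m)})$ into a sum of monomials of the shape (polynomial of degree $\le r^{\rm Z}-\ell$ in $S(m-1)$)$\times$(monomial of degree $\ell$ in $\xi^{(m)}$), to condition on the $\sigma$-algebra $\mathcal{F}_{m-1}$ (so that the $\xi^{(m)}$-factor integrates against the step distribution), and then to bound the resulting polynomials in $S(m-1)$ via Doob and Burkholder--Rosenthal type martingale moment inequalities. Assumption (SA) kills the odd-order cross terms in the expansion that would otherwise exceed the moment budget.

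The bound of (d) drops out essentially for free from the same expansion: by controlling each factor $(x_j\pm x_i)$ (and, for Z=C, each $x_i$) at $S(\tau_x^{\rm Z})$ by its value at $x$ plus an accumulation of increments, and by adding the trivial estimate $h^{\rm Z}(x)\le h_2^{\rm Z}(x)$, one arrives at $V^{\rm Z}(x)\le c\,h_2^{\rm Z}(x)$. For (b), I couple two walks with identical increments so that $S^y(n)=S^x(n)+(y-x)$. The hypotheses on $y-x$ first ensure the inclusion $\{\tau_x^{\rm Z}>n\}\subseteq\{\tau_y^{\rm Z}>n\}$ (a short calculation for Z=D uses that $y_1+y_2\ge x_1+x_2$ together with the gap conditions implies $y_i\ge x_i$ for every $i$), and then force each factor of the product form of $h^{\rm Z}(S^y(n))$ to dominate the corresponding factor of $h^{\rm Z}(S^x(n))$. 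Inserting this into the limit representation of (a) produces $V^{\rm Z}(x)\le V^{\rm Z}(y)$. For (c), I start from $V^{\rm Z}(x)=h^{\rm Z}(x)-\mathbb{E}_x[h^{\rm Z}(S(\tau_x^{\rm Z}))]$ and divide by $h^{\rm Z}(x)$. As the gaps of $x$ and the boundary distance tend to infinity, Chebyshev applied to each single factor in the exit event forces $\mathbb{P}_x(\tau_x^{\rm Z}<\infty)\to 0$; combined with the domination supplied by (d), this yields $\mathbb{E}_x[h^{\rm Z}(S(\tau_x^{\rm Z}))]=o(h^{\rm Z}(x))$, hence $V^{\rm Z}(x)\sim h^{\rm Z}(x)$.

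For (e), the decisive additional ingredient is that $V^{\rm Z}$ is regular for the sub-Markov kernel of the walk killed upon leaving $W^{\rm Z}$:
$$
\mathbb{E}_x[V^{\rm Z}(S(1));\tau_x^{\rm Z}>1] \;=\; V^{\rm Z}(x), \qquad x\in W^{\rm Z},
$$
which is a direct computation from the definition of $V^{\rm Z}$, the strong Markov property at time $1$, and the martingale identity of Proposition~\ref{mar}. Iteration gives $V^{\rm Z}(x)=\mathbb{E}_x[V^{\rm Z}(S(n));\tau_x^{\rm Z}>n]$ for every $n$. Since (a) already provides $V^{\rm Z}\ge 0$, a putative vanishing $V^{\rm Z}(x)=0$ at some $x$ would force $V^{\rm Z}(S(n))=0$ almost surely on $\{\tau_x^{\rm Z}>n\}$. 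On the other hand, Donsker's invariance principle shows that for $n$ large the rescaled position $S(n)/\sqrt n$ lies, with strictly positive probability while remaining in $W^{\rm Z}$, in a region where all gaps and the boundary distance are of order $\sqrt n$; by (c) one then has $V^{\rm Z}(S(n))>0$ on that event. The resulting contradiction completes the proof of (e).
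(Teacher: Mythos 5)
Your outline captures the easy pieces (the optional-stopping identity and the translation coupling for (b)), but the parts you dismiss as technicalities are exactly where the paper's work is, and your proposed substitutes do not close those gaps.

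The central difficulty in (a) and in (d) is the uniform bound $\mathbb{E}_x[h^{\rm Z}(S(n));T_x^{\rm Z}>n]\le C\,h_2^{\rm Z}(x)$ (and the resulting integrability of $h^{\rm Z}(S(\tau_x^{\rm Z}))$). Note that the tail $\mathbb{P}_x(\tau_x^{\rm Z}>n)\sim C n^{-\alpha^{\rm Z}/2}$ has exponent $\alpha^{\rm Z}$ equal to the \emph{total} degree of $h^{\rm Z}$, so one sits exactly at the critical threshold: $\mathbb{E}_x[(\tau_x^{\rm Z})^{\alpha^{\rm Z}/2}]=\infty$, and a crude bound of the form $\mathbb{E}[|h^{\rm Z}(S(\tau))|]\lesssim\mathbb{E}[\tau^{\alpha^{\rm Z}/2}]$ is useless. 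This is why Doob or Burkholder--Rosenthal estimates for $|S(m)|$ at fixed $m$, as you propose, cannot be summed over the event $\{\tau=m\}$; the moment budget is too tight. The paper, following Denisov--Wachtel, instead works with the larger stopping time $T_x^{\rm Z}=\inf\{n\colon h^{\rm Z}(S(n))\le 0\}$, observes that $h^{\rm Z}(S(n))\mathbf{1}_{\{T_x^{\rm Z}>n\}}$ is a nonnegative submartingale, and proves the uniform bound via the auxiliary Weyl chambers $W_{n,\epsilon}^{\rm Z}$ (entrance in at most $n^{1-\epsilon}$ steps except on an exponentially small event) together with the KMT-type strong coupling to Brownian motion and the Grabiner/Varopoulos exit-time estimates. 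Parts (a), (c), (d), (e) all flow from that bound; your plan omits this machinery entirely.

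Two further concrete errors. In (c) you write that Chebyshev forces $\mathbb{P}_x(\tau_x^{\rm Z}<\infty)\to 0$; this is false. The step distribution is centered (and in fact symmetric by (SA)), so each coordinate is a recurrent one-dimensional walk and $\tau_x^{\rm Z}<\infty$ almost surely for every $x$. The assertion $\mathbb{E}_x[h^{\rm Z}(S(\tau_x^{\rm Z}))]=o(h^{\rm Z}(x))$ is true, but must be extracted from the same fine analysis as (d)/(e) (the paper obtains (c) inside the derivation of (e) via the supermartingale $V^{(T^{\rm Z})}(S(n))\mathbf{1}_{\{\tau_x^{\rm Z}>n\}}$), not from a vanishing exit probability. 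Since your proof of (e) leans on your (c), it inherits this defect, and in addition your appeal to Donsker is not by itself sufficient to show that $\{\tau_x^{\rm Z}>n,\ S(n)\ \text{deep inside}\ W^{\rm Z}\}$ has positive probability; something like the CLT-based entrance estimate for $W^{\rm Z}_{n,\epsilon}$ is needed to make this rigorous.

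Finally, in (b) the parenthetical claim for ${\rm Z}={\rm D}$ that the hypotheses imply $y_i\ge x_i$ for every $i$ is wrong (e.g.\ $x=(0,1)$, $y=(-\tfrac12,2)$ satisfies the hypotheses with $y_1<x_1$). It is also irrelevant: what one actually needs, and what does follow, is that $y_j-y_i\ge x_j-x_i$ and $y_j+y_i\ge x_j+x_i$ for all $i<j$, whence both the event inclusion $\{\tau_x^{\rm D}>n\}\subseteq\{\tau_y^{\rm D}>n\}$ and the factorwise domination of $h^{\rm D}$. With that correction, (b) is the one part of your proposal that is essentially sound and agrees with the paper's remark that (a) and (b) ``pop out easily from the method of proof.''
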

With help of these insights we get a hold on the upper tails of the exit time:
\begin{theorem}\label{thm-upptails}
The asymptotic behavior for $n\rightarrow\infty$ of the exit time starting from
$x\in W^{\rm Z}$ is given by 
$$\mathbb{P}(\tau_x^{\rm Z} > n) \sim \varkappa^{\rm Z} V^{\rm
Z}(x)n^{-(\alpha^{\rm Z})/2}$$
with $\alpha^{\rm C} = k^2$ and $\alpha^{\rm D} = k^2 - k$, and $\varkappa^{\rm
C}$, $\varkappa^{\rm D}$ the following constants:
\begin{eqnarray*}
\varkappa^{\rm D} & = & \frac{2^{(3k^2 -3k + 2)/2}}{\pi^k k!} \prod_{1\leq i < j
\leq k} [(2j-1)^2 - (2i-1)^2]^{-1} \prod_{i=1}^k
\left[\Gamma\left(1+\frac{i}{2}\right)\Gamma\left(\frac{1+i}{2}\right)\right] \\
\varkappa^{\rm C} & = & \varkappa^{\rm D} 2^{(3k-2)/2} \prod_{i=1}^k
(2k+1-2i)^{-1}.
\end{eqnarray*}
\end{theorem}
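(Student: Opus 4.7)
The plan is to adapt the strategy of Denisov and Wachtel \cite{DenWac} from the type A chamber to the chambers of type C and D. The Brownian motion input is Grabiner's asymptotic
$$\mathbb{P}_y^{\rm BM}(\tau^{\rm Z}_{\rm BM}>t)\;\sim\;\varkappa^{\rm Z}\,h^{\rm Z}(y)\,t^{-\alpha^{\rm Z}/2},\qquad t\to\infty,$$
valid uniformly for $|y|=o(\sqrt t)$, with precisely the constants displayed in the theorem. These are obtained by applying the Karlin--McGregor/reflection formula summed over the Weyl group of type Z, integrating the resulting signed kernel against Lebesgue measure on $W^{\rm Z}$, and evaluating an explicit Selberg/Mehta integral of $h^{\rm Z}(y)^2 e^{-|y|^2/2}$ on $W^{\rm Z}$; the factor $k!$ is division by the size of the permutation part of the Weyl group, while the $\Gamma$-products and powers of $2$ arise from the Mehta integral for the root system $D_k$, with the extra product $\prod_i(2k+1-2i)^{-1}$ coming from specialising to $C_k$.

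To pass from Brownian motion to the random walk I would use the Sakhanenko/KMT strong coupling: under (MA) there exists a standard Brownian motion $B$ on a common probability space such that $\max_{j\le n}|S(j)-B(j)|$ is of order $n^{1/r^{\rm Z}}$ with overwhelming probability. The key technical lemma is that for any starting point $y\in W^{\rm Z}$ whose relevant gaps ($y_1$ and $y_j-y_{j-1}$ for $\mathrm{Z}=\mathrm{C}$; $y_1+y_2$ and $y_j-y_{j-1}$ for $\mathrm{Z}=\mathrm{D}$) all exceed $n^{1/r^{\rm Z}+\epsilon}$,
$$\mathbb{P}_y^{\rm RW}(\tau^{\rm Z}>n)\;=\;(1+o(1))\,\mathbb{P}_y^{\rm BM}(\tau^{\rm Z}_{\rm BM}>n),$$
which follows by coupling together with a sharp Brownian estimate showing that a path starting deep in $W^{\rm Z}$ is very unlikely to come within the coupling error of $\partial W^{\rm Z}$ while still surviving until time $n$.

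Granted this lemma, decompose at an intermediate time $m=m_n$ with $m_n\to\infty$ and $m_n=o(n)$ (say $m_n=n^\beta$ with $\beta\in(0,1)$ chosen close enough to $1$ that $n^{\beta/2}\gg n^{1/r^{\rm Z}+\epsilon}$, so that the gap condition of the lemma is typically met at time $m_n$):
$$\mathbb{P}(\tau_x^{\rm Z}>n)=\mathbb{E}_x\bigl[\mathbb{P}_{S(m_n)}(\tau^{\rm Z}>n-m_n);\,\tau_x^{\rm Z}>m_n\bigr].$$
Split $\{\tau_x^{\rm Z}>m_n\}$ into a good event (where $S(m_n)$ has all gaps of order $\sqrt{m_n}$) and a bad event; the contribution of the latter is killed by the dominating function $h_2^{\rm Z}$ of Proposition~\ref{posfin}(d), which is integrable against the $m_n$-step distribution thanks to (MA). On the good event the lemma combined with Grabiner's asymptotic replaces the inner probability by $\varkappa^{\rm Z}h^{\rm Z}(S(m_n))(n-m_n)^{-\alpha^{\rm Z}/2}(1+o(1))$, and dominated convergence yields
$$\mathbb{P}(\tau_x^{\rm Z}>n)\;\sim\;\varkappa^{\rm Z}n^{-\alpha^{\rm Z}/2}\,\mathbb{E}_x\bigl[h^{\rm Z}(S(m_n));\,\tau_x^{\rm Z}>m_n\bigr]\;\sim\;\varkappa^{\rm Z}V^{\rm Z}(x)\,n^{-\alpha^{\rm Z}/2},$$
where the final $\sim$ is Proposition~\ref{posfin}(a) applied as $m_n\to\infty$, and positivity of $V^{\rm Z}$ from Proposition~\ref{posfin}(e) ensures the asymptotic is non-degenerate.

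The main obstacle is the boundary-layer estimate under the minimal moment hypothesis: since the KMT error is merely polynomial in $n$, the control of $\mathbb{P}_y^{\rm BM}(\tau^{\rm Z}_{\rm BM}\in[n-n^{1/r^{\rm Z}+\epsilon},n])$ and of the probability that $B$ visits the $n^{1/r^{\rm Z}+\epsilon}$-neighbourhood of $\partial W^{\rm Z}$ during survival must be quantitative and sharp. This is exactly the point at which the exponent $r^{\rm Z}$ in (MA) is calibrated so that the maximal coordinate degree of $h^{\rm Z}$ (and of the dominating $h_2^{\rm Z}$ from Proposition~\ref{posfin}(d)) is matched; for $k=2$ and $\mathrm{Z}=\mathrm{D}$, the same coupling argument forces the strict inequality $r^{\rm D}>2$, and similarly $r^{\rm C}=3$ rather than $2$ for $\mathrm{Z}=\mathrm{C}$, in order to absorb the boundary-layer error.
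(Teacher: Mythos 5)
Your proposal follows the same overall strategy as the paper (and as Denisov--Wachtel, whom the paper follows): take Grabiner's Brownian-motion asymptotics for the Weyl-chamber exit time as the fundamental input, strong-couple the random walk to Brownian motion, split the time horizon into a short initial segment on which the walk is pushed far from $\partial W^{\rm Z}$ followed by a long remainder on which the coupling controls the exit event, and close the loop with Proposition~\ref{posfin}(a) to recover $V^{\rm Z}(x)$. So this is essentially the paper's argument.

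Two precision points are worth flagging. First, your key coupling lemma is stated under the gap condition $y_j-y_{j-1}>n^{1/r^{\rm Z}+\epsilon}$ (and the analogous condition on $y_1$ or $y_1+y_2$), whereas the paper organizes this around the auxiliary chamber $W_{n,\epsilon}^{\rm Z}$ whose defining threshold is $n^{1/2-\epsilon}$ -- a much stronger separation from the boundary. Your lemma as stated is at the marginal scale at which the coupling error is merely \emph{smaller} than the gaps; for the boundary-layer estimate to actually close, one really wants the gaps at the scale $n^{1/2-\epsilon}$ (which is what you in fact feed in, since on the good event at time $m_n=n^\beta$ the gaps are $\asymp\sqrt{m_n}=n^{\beta/2}=n^{1/2-\epsilon}$). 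So the hypothesis of your stated lemma is weaker than what is used or proved, and the paper's stopping-time formulation via $\nu_n^{\rm Z}=\inf\{m:S(m)\in W_{n,\epsilon}^{\rm Z}\}$, with $\mathbb{P}_x(\nu_n^{\rm Z}>n^{1-\epsilon})$ exponentially small, makes this cleaner. Second, the control of the bad-event contribution needs the uniform-in-$n$ bound $\mathbb{E}_x[h^{\rm Z}(S(n));T_x^{\rm Z}>n]\le C\,h_2^{\rm Z}(x)$ (the analogue of \eqref{unifbou}), which is what gives uniform integrability of $h^{\rm Z}(S(m_n))1_{\{\tau_x^{\rm Z}>m_n\}}$; Proposition~\ref{posfin}(d) by itself only bounds the \emph{limit} $V^{\rm Z}(x)$ by $h_2^{\rm Z}(x)$ and does not directly deliver the required tightness. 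These are the two places where your sketch is slightly looser than the argument in the paper, but the architecture is the same.
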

The next result shows that $V^{\rm Z}$ is indeed suitable for an $h$-transform:
\begin{proposition}\label{reg}
$V^{\rm Z}$ is regular for the restriction of the transition kernel to $W^{\rm
Z}$. 
\end{proposition}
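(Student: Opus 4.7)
The plan is to prove the regularity equation
\[
\mathbb{E}_x\bigl[V^{\rm Z}(S(1));\, \tau_x^{\rm Z} > 1\bigr] = V^{\rm Z}(x), \qquad x \in W^{\rm Z},
\]
directly from the definition \eqref{VZdef} combined with the martingale property from Proposition~\ref{mar}. The crucial observation is that the definition of $V^{\rm Z}$ as $h^{\rm Z}$ minus a harmonic correction is tailor-made so that the ``missing'' boundary contribution accounts for exactly the event $\{\tau_x^{\rm Z}=1\}$ that we remove on the left-hand side.

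Concretely, I would first insert the defining formula $V^{\rm Z}(S(1)) = h^{\rm Z}(S(1)) - \mathbb{E}_{S(1)}[h^{\rm Z}(S(\tau^{\rm Z}))]$ into the left-hand side above. Then, on the event $\{\tau_x^{\rm Z}>1\}$, we have $S(1)\in W^{\rm Z}$, and shifting the walk by one step shows (by the Markov property at the deterministic time $1$) that $\mathbb{E}_{S(1)}[h^{\rm Z}(S(\tau^{\rm Z}))]$ equals $\mathbb{E}_x[h^{\rm Z}(S(\tau_x^{\rm Z}))\mid \mathcal{F}_1]$ on this event. Taking outer expectations therefore gives
\[
\mathbb{E}_x[V^{\rm Z}(S(1));\, \tau_x^{\rm Z} > 1]
 = \mathbb{E}_x[h^{\rm Z}(S(1));\, \tau_x^{\rm Z} > 1]
 - \mathbb{E}_x[h^{\rm Z}(S(\tau_x^{\rm Z}));\, \tau_x^{\rm Z} > 1].
\]

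Next I would split the two expectations on the right according to whether $\tau_x^{\rm Z}=1$ or not. Since $\{\tau_x^{\rm Z}=1\}=\{S(1)\notin W^{\rm Z}\}$ and on this event $S(\tau_x^{\rm Z})=S(1)$, the two boundary contributions cancel. What remains is
\[
\mathbb{E}_x[h^{\rm Z}(S(1))] - \mathbb{E}_x[h^{\rm Z}(S(\tau_x^{\rm Z}))],
\]
and now Proposition~\ref{mar} identifies the first term with $h^{\rm Z}(x)$, giving precisely $V^{\rm Z}(x)$ by \eqref{VZdef}.

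The only real obstacle is justifying the manipulation of the various integrals, i.e., that every term in sight is absolutely integrable so that the splitting and cancellation are legitimate. The finiteness of $\mathbb{E}_x[h^{\rm Z}(S(1))]$ follows from the moment assumption (MA) because $h^{\rm Z}$ is a polynomial of the appropriate total degree in the coordinates; the finiteness of $\mathbb{E}_x[h^{\rm Z}(S(\tau_x^{\rm Z}))]$ is exactly the well-definedness of $V^{\rm Z}$ established in Proposition~\ref{posfin}(a)--(d). Together these allow the split of each expectation into the contributions from $\{\tau_x^{\rm Z}=1\}$ and $\{\tau_x^{\rm Z}>1\}$, and the Markov identification in the previous paragraph is then a standard application of Fubini. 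Once these integrability issues are settled, the computation collapses to the short chain of equalities outlined above.
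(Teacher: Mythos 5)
Your proposal is correct and follows essentially the same computation as the paper's own proof: insert the definition of $V^{\rm Z}$, use the Markov property at time $1$ to replace $\mathbb{E}_{S(1)}[h^{\rm Z}(S(\tau^{\rm Z}))]$ on $\{\tau_x^{\rm Z}>1\}$, split off the $\{\tau_x^{\rm Z}\leq 1\}$ contributions (on which $S(\tau_x^{\rm Z})=S(1)$, so they cancel), and conclude via the martingale property of Proposition~\ref{mar}. The additional remarks you make about integrability of $h^{\rm Z}(S(1))$ and $h^{\rm Z}(S(\tau_x^{\rm Z}))$ are a useful justification of the term-by-term splitting that the paper leaves implicit.
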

In particular, using Theorem~\ref{thm-upptails}, the two ways of conditioning
the walk to stay in $W^{\rm Z}$ that we mentioned in the introduction coincide.

Furthermore, we prove a functional limit theorem for the conditional walk in the
spirit of Donsker's theorem. Let us introduce the limit processes of the scaled
random walks and state our result. For a $k$-dimensional Brownian motion 
one can make a change of measure in the sense of
Doob's $h$-transform using the corresponding r\'eduite:
$$
\widehat{\mathbb{P}}_x^{(h^{\rm Z})}(B(t) \in \d y) = \mathbb{P}_x(B(t) \in \d
y; \tau_x^{\rm BM,Z} > t)\frac{h^{\rm Z}(y)}{h^{\rm Z}(x)},\qquad x,y\in W^{\rm Z},
$$
with $\tau_x^{\rm BM,Z} = \inf\{t \geq 0 \colon x + B(t) \notin W^{\rm Z} \}$
denoting the exit time of the Brownian motion from the type-Z Weyl chamber when
started at $x$. We will term the corresponding processes {\it
Dyson's Brownian Motion of type} Z; however note that for ${\rm Z}={\rm D}$ this expression is
used differently in \cite{Fer}(there is a little ambiguity, but it is not of any serious 
concern; see also \cite{KatoTane}). It is possible to start these processes from the origin 
(this can be seen by the same arguments as in \cite{Yor}). 

\begin{theorem}\label{thm-scalconv}
For $x\in W^{\rm Z}$, as $n\to\infty$,
$$
\mathbb{P}_x\left(\frac{1}{\sqrt{n}} S(n)\in\cdot\, \Big|\, \tau_x^{\rm Z} >
n\right) \Rightarrow \mu^{\rm Z}, 
$$
with $\mu^{\rm Z}$ the probability measure on $W^{\rm Z}$ with density
proportional to $h^{\rm Z}(y)\exp{(-|y|^2 /2)}$. Additionally the process
$(X^n(t))_{t\geq 0} = (\frac{1}{\sqrt{n}} S([nt]))_{t\geq 0}$ under the
probability measure $\widehat{\mathbb{P}}_{x\sqrt{n}}^{(V^{\rm Z})}$ weakly
converges to Dyson's Brownian motion of type Z started at $x$. Under 
$\widehat{\mathbb{P}}_{x}^{(V^{\rm Z})}$, this process converges weakly to 
Dyson's Brownian motion of type Z started at zero.
\end{theorem}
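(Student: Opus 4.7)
The plan is to follow the Denisov--Wachtel scheme: establish first a local limit theorem for the killed walk, then deduce the one-dimensional marginal convergence from Theorem~\ref{thm-upptails}, and finally bootstrap to the process statements through the Markov property of the $h$-transformed chain together with tightness. First I would prove that, uniformly for $y$ in compact subsets of $W^{\rm Z}$,
$$
n^{k/2}\,\mathbb{P}_x\!\bigl(S(n)\in\sqrt n\,y+[0,1)^k,\ \tau_x^{\rm Z}>n\bigr)\;\sim\;c^{\rm Z}\,V^{\rm Z}(x)\,h^{\rm Z}(y)\,e^{-|y|^2/2}.
$$
This LLT is obtained by decomposing at an intermediate time $m_n$ with $m_n\to\infty$ and $m_n/n\to 0$: the total surviving mass at time $m_n$ is supplied by Proposition~\ref{posfin}(a), the spatial profile of the killed walk at time $m_n$ is matched to killed Brownian motion on $W^{\rm Z}$ through the strong Hungarian coupling, and the remaining $n-m_n$ steps, seen from a macroscopic start, are handled by the free LLT combined with the explicit (Karlin--McGregor / Grabiner) asymptotics of the killed Brownian heat kernel in $W^{\rm Z}$. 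Integrating this LLT in $y$ recovers Theorem~\ref{thm-upptails}, so dividing by it yields statement~(1).

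For statement~(2), I would use that under $\widehat{\mathbb{P}}^{(V^{\rm Z})}_{x\sqrt n}$ the chain has transition kernel $\widehat p(u,v)=p(u,v)\,\mathbf 1_{v\in W^{\rm Z}}\,V^{\rm Z}(v)/V^{\rm Z}(u)$. Iterating the Markov property and inserting the LLT of paragraph~1 into each factor yields convergence of the finite-dimensional densities to those of Dyson's Brownian motion of type~Z started at $x$; here Proposition~\ref{posfin}(c) is used to replace the outer $V^{\rm Z}$-ratio by the $h^{\rm Z}$-ratio, since all intermediate arguments are of order $\sqrt n$. Tightness of $(X^n)$ on $[0,T]$ then follows from the Donsker invariance principle for the unconditioned walk and a contiguity argument: by Proposition~\ref{posfin}(d) the Radon--Nikodym derivative $V^{\rm Z}(S([nT]))/V^{\rm Z}(x\sqrt n)$ is dominated by $c\,h_2^{\rm Z}(S([nT]))/h^{\rm Z}(x\sqrt n)$, whose expectation is uniformly bounded in $n$ thanks to (MA).

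For statement~(3), starting from a fixed $x\in W^{\rm Z}$ the scaled chain diffuses to macroscopic distances within a vanishing time window; applying the previous step from any $\delta>0$ onward, the conditional law of $(X^n(t))_{t\ge\delta}$ converges to that of Dyson's BM of type~Z at time $\delta$ under the entrance distribution from zero (whose existence is secured as in~\cite{Yor}), and sending $\delta\downarrow 0$ identifies the limit as Dyson's BM of type~Z started at the origin. The main obstacle is the LLT in paragraph~1: extending the Denisov--Wachtel coupling scheme to type~C and~D requires controlling the influence of the additional wall at the origin (type~C) and of the reflection of the first coordinate (type~D) on the strong-coupling error. This is precisely where the symmetry assumption (SA) enters to neutralise the odd-order defect terms produced when the walk straddles the extra walls, and where the moment assumption (MA) provides the coupling rate needed to beat the $n^{-\alpha^{\rm Z}/2}$ scale of the survival probability.
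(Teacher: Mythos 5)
Your proposal follows the same Denisov--Wachtel route the paper references: prove a local limit theorem for the killed walk by splitting at an intermediate time, use the Hungarian coupling to compare with killed Brownian motion, invoke Grabiner's asymptotics for the killed heat kernel, and then deduce marginal convergence, finite-dimensional convergence of the $V^{\rm Z}$-transformed chain, tightness, and finally the start-from-zero statement by shrinking the initial time window. That is indeed the structure the paper sketches and the one carried out in full for type~A by Denisov and Wachtel. However, there are a few points worth flagging.

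First, the stated local limit theorem has the wrong normalization: as written,
\[
n^{k/2}\,\mathbb{P}_x\bigl(S(n)\in\sqrt n\,y+[0,1)^k,\ \tau_x^{\rm Z}>n\bigr)\;\to\;c^{\rm Z}\,V^{\rm Z}(x)\,h^{\rm Z}(y)\,e^{-|y|^2/2},
\]
but upon integrating over $y\in W^{\rm Z}$ the left side is $\mathbb{P}(\tau_x^{\rm Z}>n)$, which decays like $n^{-\alpha^{\rm Z}/2}$ by Theorem~\ref{thm-upptails}, whereas the right side is a strictly positive constant. The correct prefactor is $n^{(k+\alpha^{\rm Z})/2}$ (equivalently, keep $h^{\rm Z}$ evaluated at $\sqrt n\,y$ so that the extra $n^{\alpha^{\rm Z}/2}$ is absorbed). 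Without this the ``integrate to recover Theorem~\ref{thm-upptails}, then divide'' step is internally inconsistent.

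Second, the tightness-via-contiguity argument is not complete as stated. The Radon--Nikodym derivative $V^{\rm Z}(S([nT]))\mathbf 1_{\{\tau^{\rm Z}_{x\sqrt n}>nT\}}/V^{\rm Z}(x\sqrt n)$ always has expectation exactly one under $\mathbb{P}_{x\sqrt n}$; bounding its expectation gives nothing. What you actually need is uniform integrability, which does follow here but via a Scheff\'e-type argument: use Proposition~\ref{posfin}(c) and Donsker's theorem to show the derivative converges in distribution to $h^{\rm Z}(x+B(T))\mathbf 1_{\{\tau^{\rm BM,Z}_x>T\}}/h^{\rm Z}(x)$, whose expectation equals one by the martingale property of $h^{\rm Z}$ for Brownian motion; nonnegativity plus convergence of means then yields $L^1$-convergence and hence uniform integrability. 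Proposition~\ref{posfin}(d) is useful for an a priori domination but by itself does not close this step.

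Third, your reading of the role of the symmetry assumption (SA) is off. (SA) is not there to control coupling errors caused by the extra wall or reflection; it enters structurally, in the proof of Proposition~\ref{mar}, to kill the odd powers in the binomial expansion so that $h^{\rm Z}(S(n))$ is a martingale for the free walk on all of $\mathbb{R}^k$. That martingale property is the engine driving the sub-/supermartingale constructions that underlie Proposition~\ref{posfin}; the coupling rate is governed by (MA) alone. The extra walls of $W^{\rm C}$ and $W^{\rm D}$ are handled, as in the paper, by enlarging the auxiliary chamber to $W^{\rm Z}_{n,\epsilon}$ with the additional buffer inequalities, not by (SA).
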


\section{Proofs}\label{sec-proofs}

First we prove the regularity of $h^{\rm Z}$ on $\R^k$, which is essential for
our purposes:

\begin{proof}[\bf Proof of Proposition \ref{mar}]
We make an induction on the number $k$ of components. For this we exploit the
Vandermonde determinant representation and write $h^{\rm Z}$ in the form
$$h^{\rm Z}(x)=\det\left[(x_j^{2i-2 + \gamma})_{i,j\in [k]}\right], \qquad [k] =
\{1, \ldots, k\},$$  
where $\gamma = 1$ for $Z=C$ and  $\gamma = 0$ for $Z=D$. We dispense with
another superscript as not to overburden the notation. For $k=1$ the assertion
trivially holds either by (SA) ($Z=C$) or a constant determinant ($Z=D$). Now
fix $k\geq 2$ and assume that our assertion is true for $k-1$. For any $x\in
\mathbb{R}^k$ and $m\in [k]$ we define
$$
h_m^{\rm Z}(x) =  \det\left[(x_j^{2i-2 + \gamma})_{i\in [k-1], j \in
[k]\backslash \{m\}}\right],
$$
which is the determinant of the matrix that we obtain by deleting the last row
and the $m$th column. In particular,  it is a $(k-1)$-dimensional analogue of
$h^{\rm Z}$. Using Laplace expansion we write
$$
h^{\rm Z}(x) = \sum_{m=1}^k (-1)^{m-1} x_m^{2k-2 + \gamma} h_m^{\rm Z}(x).
$$
We use this in the expectation and denote by $\mu$ the step distribution of the
random walk, to obtain
$$
\mathbb{E}_x[h^{\rm Z}(S(1))] = \int_{\mathbb{R}^k} \mu(\d y)\, h^{\rm Z}(x+y) =
\sum_{m=1}^k (-1)^{m-1} \int_{\mathbb{R}^k} \mu(\d y)\,  (x_m + y_m)^{2k-2 +
\gamma} h_m^{\rm Z}(x+y).
$$
We denote by $\nu$ the $m$-th marginal of $\mu$, which does not depend on $m$ by
exchangeability, and by $\mu_m(\d\tilde y|y_m)$ the conditional distribution of
$\mu$ given the coordinate $y_m$, which is exchangeable for $\tilde{y} = (y_1,
\ldots , y_{m-1}, y_{m+1}, \ldots, y_k)$. Hence, $\mu(\d y)=\nu(\d
y_m)\mu_m(\d\tilde y|y_m)$. By our induction hypothesis we have for any $y_m\in
\mathbb{R}$ and $x\in \mathbb{R}^k$ that 
$$\int_{\mathbb{R}^{k-1}} \mu_m(\d \tilde{y}|y_m)\, h_m^{\rm Z}(x+y) = h_m^{\rm
Z}(x).$$
This allows us to complete our computation:
\begin{align*}
\mathbb{E}_x[h^{\rm Z}(S(1))] & = \sum_{m=1}^k (-1)^{m-1} \int_{\mathbb{R}}
\nu(\d y_m)\, (x_m + y_m)^{2k-2 + \gamma} h_m^{\rm Z}(x)  \\ 
& = \sum_{m=1}^k (-1)^{m-1} \int_{\mathbb{R}} \nu(\d z)\, \sum_{l=0}^{2k-2 +
\gamma} \binom{ 2k-2 + \gamma}  l x_m^{2k-2 + \gamma - l} z^l h_m^{\rm Z}(x) \\
& =  \sum_{l=0, l\, \mbox{even}}^{2k-2+\gamma} \int_{\mathbb{R}} \nu(\d z)\, z^l
\binom{ 2k-2 + \gamma}  l \sum_{m=1}^k (-1)^{m-1} x_m^{2k-2 + \gamma - l}
h_m^{\rm Z}(x),
\end{align*}
where we used (SA) in the third line. Now we apply the Laplace expansion to the
$m$-sum in the last line. For $l\geq 2$ this $m$-sum vanishes since its summands
are equal to the determinants of matrices with two identical columns.
For $l=0$ it is equal to $h^{\rm Z}(x)$. This finishes the proof.
\end{proof}

Now we prove regularity of $V^{\rm Z}$ on $W^{\rm Z}$.

\begin{proof}[\bf Proof of Proposition \ref{reg}]
For any $x\in W^{\rm Z}$ we get by the strong Markov property and the martingale
property of Proposition~\ref{mar}
\begin{align*} 
& \mathbb{E}_x[V^{\rm Z}(S(1))1_{\{\tau_x^{\rm Z} >1\}}] = \\
& = \mathbb{E}_x[h^{\rm Z}(S(1))1_{\{\tau_x^{\rm Z} >1\}}] -
\mathbb{E}_x[\mathbb{E}_{S(1)}[h^{\rm Z}(S(\tau_x^{\rm Z}))]1_{\{\tau_x^{\rm
Z}>1\}}]\\
&=\mathbb{E}_x[h^{\rm Z}(S(1))1_{\{\tau_x^{\rm Z} >1\}}] - \mathbb{E}_x[h^{\rm
Z}(S(\tau_x^{\rm Z}))1_{\{\tau_x^{\rm Z}>1\}}]\\
&=\mathbb{E}_x[h^{\rm Z}(S(1))1_{\{\tau_x^{\rm Z} >1\}}] - \mathbb{E}_x[h^{\rm
Z}(S(\tau_x^{\rm Z}))] + \mathbb{E}_x[h^{\rm Z}(S(\tau_x^{\rm
Z}))1_{\{\tau_x^{\rm Z} \leq 1\}}]\\
&=\mathbb{E}_x[h^{\rm Z}(S(1))] - \mathbb{E}_x[h^{\rm Z}(S(1))1_{\{\tau_x^{\rm
Z} \leq 1\}}] - \mathbb{E}_x[h^{\rm Z}(S(\tau_x^{\rm Z}))] + \mathbb{E}_x[h^{\rm
Z}(S(\tau_x^{\rm Z}))1_{\{\tau_x^{\rm Z} \leq 1\}}]\\
&=V^{\rm Z}(x). 
\end{align*}
\end{proof}

Now we turn to the proofs of the remaining results, Proposition~\ref{posfin} and
Theorems~\ref{thm-upptails} and \ref{thm-scalconv}. We decided to give a sketch
only, since the methods employed by Denisov and Wachtel for the case $W^{\rm A}$
can be straightforwardly extended. Rather than recapping all technical details
in tedious length, we indicate the key steps of their proof and illuminate the
differences that are necessary to adapt. 

We first
explain how they obtain their analogues to Proposition~\ref{posfin} and
Theorems~\ref{thm-upptails} and \ref{thm-scalconv} for the Weyl chamber of type
A, in particular the asymptotics $\mathbb{P}(\tau_x^{\rm Z} > n) \sim \varkappa^{\rm
A} V^{\rm A}(x)n^{-k(k-1)/4}$, with $\varkappa^{\rm A}$ a constant, and the weak
convergence to Dyson's Brownian motion of type A. 
Their idea is to consider, additionally to $\tau_x^{\rm A}$, the stopping time 
$$
T_x^{\rm A} = \inf\{n\in\N_0\colon h^{\rm A}(S(n)) \leq 0\}.
$$
This has the advantage that the triggering of the defining condition can more
easily be exploited for estimates since we have control over the sign of $h^{\rm
A}(S(n))$. Furthermore, obviously, $T_x^{\rm A} \geq \tau_x^{\rm A}$ almost
surely. Therefore, certain estimates involving $T_x^{\rm A}$ can be directly
transferred to estimates involving $\tau_x^{\rm A}$. Crucial for their approach
is the fact that $(h^{\rm A}(S(n)))_{n\in\N_0}$ is a martingale. This yields
that the sequence $Y_n = h^{\rm A}(S(n))1_{\{T_x^{\rm A} > n\}}$ is a
nonnegative submartingale (this is solely based on the martingale property).
With this they next show that there 
is a universal constant $C$ such that 
\begin{equation}\label{unifbou}
\mathbb{E}_x[h^{\rm A}(S(n));T_x^{\rm A}>n] \leq C h_2^A(x),\qquad n\in\N, x\in W^{\rm A},
\end{equation}
where $h_t^{\rm A}(x) =
\prod_{1\leq i < j \leq k}(t + | x_j - x_i |)$ is defined similar to the expressions in 
Proposition~\ref{posfin}d). Proving \eqref{unifbou} is technical and lengthy and uses an auxiliary Weyl
chamber defined by
$$
W_{n,\epsilon}^{\rm A} = \{x \in \mathbb{R}^k\colon |x_j - x_i| > n^{1/2 -
\epsilon}, 1\leq i < j \leq k\},\qquad\epsilon > 0.
$$
If a point of $W^{\rm A}$ is additionally in $W_{n,\epsilon}^{\rm A}$, it is far
away from the boundary of $W^{\rm A}$. Furthermore, $W_{n,\epsilon}^{\rm A}$ has
the property that it is reached by the motion soon with high probability: 
the probability of the entrance time $\nu_n^{\rm A} =
\inf\{m\in\N_0\colon S(m) \in W_{n,\epsilon}^{\rm A} \}$ being bigger than
$n^{1-\epsilon}$ decays exponentially. Indeed, we have $\mathbb{P}_x(\nu_n^{\rm A} >
n^{1-\epsilon}) \leq \exp\{-Cn^{\epsilon}\}$. This can be shown by a 
subdivision of the trajectory into $n^{\epsilon}$ pieces and an application of
the central limit theorem to the pieces. Also expectations of $h^{\rm A}(S(n))$ on
the event $\{\nu_n^{\rm A} > n^{1-\epsilon}\}$ decay
exponentially, hence one can extend estimates for expectations which start from $x\in
W_{n,\epsilon}^{\rm A}$ to expectations with arbitrary starting points in
$W^{\rm A}$ by the strong Markov property. For the former one can elementarily
derive upper bounds with standard estimates like Doob's inequality. With this
one is able to prove the  bound in \eqref{unifbou}.

Now this in turn yields the integrability of $h^{\rm A}(S(\tau_x^{\rm A}))$ by a
direct application of martingale arguments. Furthermore, Denisov and Wachtel
obtain from this that the function  $V^{(T^{\rm A})}(x) =
\lim_{n\rightarrow\infty}\mathbb{E}_x[Y_n]$ is well defined on the set
$\{x\colon h^{\rm A}(x)>0\}$. For showing that $V^{\rm A}$ is strictly positive
on $W^{\rm A}$ they use that $(V^{(T^{\rm A})}(S(n))1_{\{\tau_x^{\rm A} >
n\}})_{n\in\N_0}$ is a supermartingale; again this is solely based on the
martingale property. 

Here we terminate our survey on the proofs of the
corresponding statements of d) and e) of Proposition \ref{posfin}. The
corresponding results to a) and b) pop out easily from the method of proof. The
proof of c) is actually part of the derivation of e). 
This finishes the sketch of their analogue of Proposition \ref{posfin}.

Now we turn to the sketch of the proofs of their analogues of
Theorems~\ref{thm-upptails} and \ref{thm-scalconv}. For this a coupling of
random walks and Brownian motion by Major \cite{Maj} is applied which has
already been used in other contexts, see \cite{BaiSui}, \cite{BodMar}. 

\begin{lemma}\label{lem-embedding}
Given that $\mathbb{E}[|\xi_j^{(i)}|^{2+\delta}] < \infty$ for some
$\delta\in(0,1)$, a Brownian motion $(B(t))_{t\geq0}$ can be defined on the same
probability space as the random walk $(S(n))_{n\in\N_0}$ such that, for $a \in(0,\frac{\delta}{2(2+\delta)})$,
$$
\mathbb{P}\left(\sup_{u\leq n}|S([u]) - B(u)| \geq n^{1/2 - a} \right) =
o\left(n^{2a + a\delta -\delta/2} \right).
$$
\end{lemma}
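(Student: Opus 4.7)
The plan is to derive this estimate directly from the one-dimensional strong approximation theorem of Major \cite{Maj}, applied coordinate-wise. Under our standing assumptions the family $\{\xi_j^{(i)}\}$ is i.i.d.\ across both indices, so the coordinate walks $(S_j(m))_{m\in\mathbb{N}_0}$, $j=1,\dots,k$, are independent; Major's theorem, under $\mathbb{E}[|\xi|^{2+\delta}]<\infty$ together with (NA) and centredness (the latter implicit in (SA) for $r=1$), produces, for each $j$, a standard Brownian motion $B_j$ coupled to $S_j$ on a possibly enlarged probability space such that
$$
\mathbb{E}\Bigl[\max_{m\leq n}\bigl|S_j(m)-B_j(m)\bigr|^{2+\delta}\Bigr]=o(n).
$$
Performing these one-dimensional couplings independently on a common probability space and setting $B=(B_1,\dots,B_k)$ yields a $k$-dimensional standard Brownian motion, and the equivalence of Euclidean and $\ell^{\infty}$ norms on $\mathbb{R}^k$ (absorbing a factor $k^{(2+\delta)/2}$) gives the same $o(n)$ bound for $\mathbb{E}[\max_{m\leq n}|S(m)-B(m)|^{2+\delta}]$.

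Next I would pass from integer times to continuous times. Since $S([u])$ is piecewise constant on $[m,m+1)$,
$$
\sup_{u\leq n}\bigl|S([u])-B(u)\bigr|\;\leq\;\max_{m\leq n}\bigl|S(m)-B(m)\bigr|\;+\;\max_{m<n}\sup_{u\in[m,m+1]}\bigl|B(u)-B(m)\bigr|.
$$
Standard Gaussian tail and union bounds show that the second summand is of order $\sqrt{\log n}$ with overwhelming probability (namely, exceeds $C\sqrt{\log n}$ with probability at most $n^{-M}$ for any $M>0$, provided $C$ is chosen large enough), hence it is negligible against $n^{1/2-a}$ and cannot affect the polynomial rate.

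It then suffices to apply Markov's inequality with exponent $2+\delta$ to the first summand at the level $n^{1/2-a}$:
$$
\mathbb{P}\!\left(\max_{m\leq n}|S(m)-B(m)|\geq n^{1/2-a}\right)\leq\frac{o(n)}{n^{(1/2-a)(2+\delta)}}=o\bigl(n^{1-(1/2-a)(2+\delta)}\bigr)=o\bigl(n^{2a+a\delta-\delta/2}\bigr),
$$
which is precisely the claimed form. The restriction $a<\delta/(2(2+\delta))$ only enters here, to ensure that the resulting exponent $2a+a\delta-\delta/2$ is negative, so that the stated bound is a genuine decay.

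The essentially only obstacle is that the $L^{2+\delta}$ form of Major's strong approximation invoked above is not the headline statement of \cite{Maj} but a consequence of its proof: the sharp argument there shows that the coupling error has a $(2+\delta)$-th moment of order $o(n)$, and this is precisely the version we need. Once this is extracted, the coordinate-wise aggregation, the passage from integer to real time, and the final Markov bound are routine.
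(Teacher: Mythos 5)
The paper never proves Lemma~\ref{lem-embedding}: it is imported verbatim from Denisov and Wachtel, who in turn quote it as a consequence of Major's 1976 strong approximation theorem, so there is no internal argument for you to match. What you have written is a plausible reconstruction of how one would deduce the stated tail bound from Major's coupling, and the bookkeeping is correct: the exponent $1-(1/2-a)(2+\delta)=2a+a\delta-\delta/2$, the role of the constraint $a<\delta/(2(2+\delta))$ in making the exponent negative, the coordinate-wise aggregation into a $k$-dimensional Brownian motion, and the $O(\sqrt{\log n})$ control of the oscillation of $B$ on unit intervals are all sound and negligible for a polynomial rate.

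The one place you owe a precise reference is the ingredient $\mathbb{E}\bigl[\max_{m\le n}|S_j(m)-B_j(m)|^{2+\delta}\bigr]=o(n)$, which you yourself flag as ``not the headline statement of \cite{Maj} but a consequence of its proof.'' That is exactly where all the content of the lemma is hiding. Major's theorem, and the Sakhanenko/KMT circle of results under a $(2+\delta)$-th moment, are usually stated either as an a.s.\ rate $|S_n-B(n)|=o(n^{1/(2+\delta)})$ or as a maximal inequality with an $O(n)$ coefficient, and a pathwise $o$ does not by itself deliver an $o(n)$ moment bound (no uniform integrability is granted). To get the little-$o$ rather than big-$O$ you would need the explicit tail estimate supplied by Major's quantile-coupling proof (essentially $\P(\max_{m\le n}|S_m-B(m)|\ge x)\le\psi(n)/x^{2+\delta}$ with $\psi(n)=o(n)$, uniformly for $x$ in the relevant range), and then your Markov step is superfluous because it is already in the desired form. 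So the outline is right, but the proof as written passes through an unproved intermediate moment statement; either cite Major's tail estimate directly, as Denisov and Wachtel do, or supply the truncation argument that upgrades the Sakhanenko $O(n)$ bound to $o(n)$.
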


Other important tools for the proof of the asymptotic behavior of
$\mathbb{P}_x(\tau_x^{\rm A} > n)$ are estimates for the upper tails of the exit
time of Brownian motion from $W^{\rm A}$ due to Grabiner \cite{Gra} and
Varopoulos \cite{Varo}. Again, the auxiliary Weyl chamber $W_{n,\epsilon}^{\rm
A}$ is used. 

To attack the upper tails of $\tau_x^{\rm A}$, we know from the
above mentioned exponential decay of $\mathbb{P}_x(\nu_n^{\rm A} >
n^{1-\epsilon})$ that the random walk reaches $W_{n,\epsilon}^{\rm A}$ after a
short time, with high probability. Using the strong Markov property at time $\nu_n^{\rm A}$, we only
have to consider starting points $y \in W_{n,\epsilon}^{\rm A}$ 
instead of $x$. For those, we use Lemma~\ref{lem-embedding} with $a = 2\epsilon$
and see that the exit times from $W^{\rm A}$ for the Brownian motion and the coupled
random walk are roughly identical with high probability, since the distances between them,
$n^{1/2 - 2\epsilon}$, are negligible with respect to the typical distances $n^{1/2 - \epsilon}$
required in  $W_{n,\epsilon}^{\rm A}$. Hence, the upper tails of the random walk exit times
can directly be related to the ones of the Brownian motion, which are well-known.
After identifying the asymptotic behavior of $\mathbb{P}_x(\tau_x^{\rm A} >
n)$, one can use it to prove the functional limit theorem in a straightforward
manner.

So, unlike in the proof of Eichelsbacher and
K\"onig, there is no need to employ the discrete analogue of the Karlin-McGregor
formula, or H\"older's inequality; the results are derived using a comparison to
Brownian motion.

Now we argue that these proofs can be straightforwardly extended to cases C and
D. This is due to several factors. First, according to Proposition~\ref{mar},
$(h^{\rm Z}(S(n)))_{n\in\N_0}$ is also a martingale, and one can analogously
define the corresponding sub- and supermartingales, $(h^{\rm
Z}(S(n))1_{\{T_x^{\rm Z} > n\}})_{n\in\N_0}$ and $(V^{(T^{\rm
Z})}(S(n))1_{\{\tau_x^{\rm Z} > n\}})_{n\in\N_0}$. Second, as one easily sees, the
inequalities $T_x^{\rm Z} \geq \tau_x^{\rm Z}$ hold almost surely, too. Third,
for proving the estimate in d) of Proposition \ref{posfin}, we split the
functions $h^{\rm Z}$ into $h^{\rm D}(x) = \prod_{1\leq i < j \leq k}(x_j -
x_i)(x_j + x_i)$ (and $h^{\rm C}$ similarly). This is a more suitable
representation when used together with the corresponding auxiliary Weyl chambers
defined by 
\begin{eqnarray*}
W_{n,\epsilon}^{\rm D} & = & \{x \in \mathbb{R}^k\colon |x_j - x_i| > n^{1/2 -
\epsilon}, |x_j + x_i| > n^{1/2 - \epsilon}, 1\leq i < j \leq k\}, \\
W_{n,\epsilon}^{\rm C} & = & W_{n,\epsilon}^{\rm D} \cap \{x \in
\mathbb{R}^k\colon |x_i| > n^{1/2 - \epsilon}, 1\leq i \leq k \}.
\end{eqnarray*}
Again the probability of the entrance time $\nu_n^{\rm Z} = \inf\{m\in\N_0\colon
S(m) \in W_{n,\epsilon}^{\rm Z} \}$ being bigger than $n^{1-\epsilon}$ decays
exponentially, which can be proved by the same argument as for
$W_{n,\epsilon}^{\rm A}$.

Fourth, by the works of Grabiner and Varopoulos \cite{Gra,Varo}, we have
analogous estimates for the upper tails of the exit times from $W^{\rm Z}$ at
our disposal. Varopoulos formulated them more generally for conical regions (i.e.,
closed under scaling by a positive constant and addition of elements), and
Grabiner formulated them directly for the Weyl chambers:

\begin{lemma}
\begin{enumerate}
\item[a)] For all $y\in W^{\rm Z}$ we have with $\tau_y^{\rm BM,Z} = \inf\{t\geq
0\colon y + B(t) \notin W^{\rm Z}\}$,
$$
\mathbb{P}(\tau_y^{\rm BM,Z} > t) \leq C \frac{h^{\rm Z}(y)}{t^{(\alpha^{\rm
Z})/2}},\qquad t>0,
$$
where $\alpha^{\rm C} = k^2$ and $\alpha^{\rm D} = k^2 - k$.

\item[b)] As $t\to\infty$,
$$
\mathbb{P}(\tau_y^{\rm BM,Z} > t) \sim \varkappa^{\rm Z} \frac{h^{\rm
Z}(y)}{t^{(\alpha^{\rm Z})/2}},
$$
uniformly in $y \in W^{\rm Z}$ satisfying $|y|\leq \theta_t \sqrt{t}$ with some
$\theta_t\rightarrow 0$.

\item[c)] For $y\in W^{\rm Z}$, denote by $b_t^{\rm Z}(y,z)$ the density of
$\mathbb{P}(\tau_y^{\rm BM,Z} > t, y + B(t) \in \d z)$. Then, as $t\rightarrow
\infty$,
$$
b_t^{\rm Z}(y,z) \sim K^{\rm Z} t^{-\frac{k}{2}}e^{-|z|^2/(2t)}h^{\rm Z}(y)h^{\rm
Z}(z)t^{-\alpha^{\rm Z}},
$$
uniformly in $y,z \in W^{\rm Z}$ satisfying $|y|\leq \theta_t \sqrt{t}$ and $|z|
\leq \sqrt{t/\theta_t}$ with some $\theta_t\rightarrow 0$, and
$$
K^{\rm C}  = \frac{2^k k! \varkappa^{\rm
C}}{\int_{\mathbb{R}^k}e^{-|x|^2/2}|h^{\rm C}(x)|\, \d x} ,  \qquad K^{\rm D}  =
\frac{2^{k-1} k! \varkappa^{\rm D}}{\int_{\mathbb{R}^k}e^{-|x|^2/2}|h^{\rm
D}(x)|\, \d x}.
$$
\end{enumerate}
\end{lemma}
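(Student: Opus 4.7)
The plan is to read all three assertions off the explicit Karlin--McGregor-type formula for the density of a $k$-dimensional Brownian motion killed on $\partial W^{\rm Z}$, which is due to Grabiner \cite{Gra}. Writing $W_{\rm Z}$ for the Weyl group of type Z (the hyperoctahedral group $\{\pm 1\}^k\rtimes S_k$ for Z=C and its index-two subgroup of even sign-changes for Z=D) and $\epsilon(\sigma)$ for the character that records the transformation behaviour of $h^{\rm Z}$ under $\sigma$, the reflection principle produces
\begin{equation*}
b_t^{\rm Z}(y,z)=(2\pi t)^{-k/2}\,\mathrm{e}^{-(|y|^2+|z|^2)/(2t)}\sum_{\sigma\in W_{\rm Z}}\epsilon(\sigma)\,\exp\!\left(\tfrac{y\cdot\sigma z}{t}\right).
\end{equation*}

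For part c), I would Taylor-expand the exponential inside the sum and interchange the finite $\sigma$-sum with the power series. Because $h^{\rm Z}$ is the reduite, i.e.\ the (up to scalar) unique lowest-degree $W_{\rm Z}$-antisymmetric polynomial, every monomial in $z$ (or in $y$) of degree strictly less than $\deg h^{\rm Z}=\alpha^{\rm Z}$ vanishes after antisymmetrization; the first non-vanishing contribution is the $n=\alpha^{\rm Z}$ term of the series, which is forced to be proportional to $h^{\rm Z}(y)h^{\rm Z}(z)$. On the stated range $|y|\leq\theta_t\sqrt{t}$, $|z|\leq\sqrt{t/\theta_t}$ with $\theta_t\to 0$, the remainder of the Taylor series contributes a factor $1+o(1)$ and the Gaussian prefactor $\mathrm{e}^{-|y|^2/(2t)}$ also tends to $1$, leaving the asserted form. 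The constant $K^{\rm Z}$ is pinned down by matching the combinatorial prefactor with the normalization $\int_{W^{\rm Z}}h^{\rm Z}(x)\mathrm{e}^{-|x|^2/2}\,\d x$ of Dyson's Brownian motion of type Z.

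Part b) is a direct corollary: integrating c) over $z\in W^{\rm Z}$ and substituting $z=w\sqrt{t}$ converts the Gaussian integral into $t^{(\alpha^{\rm Z}+k)/2}\int_{W^{\rm Z}}h^{\rm Z}(w)\mathrm{e}^{-|w|^2/2}\,\d w$, which combines with the $t^{-k/2-\alpha^{\rm Z}}$ prefactor of c) to give $\varkappa^{\rm Z}h^{\rm Z}(y)t^{-\alpha^{\rm Z}/2}$; the identity $\varkappa^{\rm Z}=K^{\rm Z}\int_{W^{\rm Z}}h^{\rm Z}(w)\mathrm{e}^{-|w|^2/2}\,\d w$ is equivalent to the formula stated for $K^{\rm Z}$ once one observes that $\int_{\R^k}|h^{\rm Z}|\mathrm{e}^{-|\cdot|^2/2}\,\d\cdot=|W_{\rm Z}|\int_{W^{\rm Z}}h^{\rm Z}\,\mathrm{e}^{-|\cdot|^2/2}\,\d\cdot$, since $h^{\rm Z}$ has constant sign on each Weyl chamber. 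Part a) is Varopoulos's general exit-time estimate \cite{Varo} for a convex cone with reduite $h^{\rm Z}$, which holds without any smallness restriction on $y$ and supplies the inexplicit constant $C$.

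The main obstacle is the explicit Selberg-type evaluation of $\int_{W^{\rm Z}}h^{\rm Z}(w)\mathrm{e}^{-|w|^2/2}\,\d w$, which ultimately produces the Gamma factors in $\varkappa^{\rm Z}$ displayed in Theorem~\ref{thm-upptails}; this computation is carried out in \cite{Gra}. Everything else in the lemma is a matter of organizing the asymptotic expansion of the reflection-principle sum and invoking the cited bounds, and no new ingredients beyond \cite{Gra,Varo} are required.
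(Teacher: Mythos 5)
The paper does not prove this lemma at all: it is quoted directly from Grabiner \cite{Gra} (parts b and c) and Varopoulos \cite{Varo} (part a), which are precisely the references your reconstruction invokes. Your reflection-principle derivation for (b) and (c)---antisymmetrizing the Taylor expansion of $\sum_{\sigma\in W_{\rm Z}}\epsilon(\sigma)\exp(y\cdot\sigma z/t)$ and noting that all terms below degree $\deg h^{\rm Z}=\alpha^{\rm Z}$ cancel while higher terms remain divisible by $h^{\rm Z}(y)h^{\rm Z}(z)$, so that the remainder is uniformly $o(1)$ on the stated range---together with the réduite-based cone bound for (a), is a correct account of the argument underlying those sources, so your approach coincides with the one the paper implicitly relies on.
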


Of course we can use the same coupling of random walks and Brownian motion as in
Lemma~\ref{lem-embedding}. Using all these ingredients, we can easily adapt the
strategy employed by Denisov and Wachtel to prove Proposition~\ref{posfin} and
Theorems~\ref{thm-upptails} and \ref{thm-scalconv}.

\section{An alternate $\boldsymbol h$-transform for $\boldsymbol {W^{\rm
C}}$}\label{sec-alternate}

In this section we present another function, $\widetilde V^{\rm C}$, that is
positive and regular on the type-C Weyl chamber, $W^{\rm C}$. This function is
in general different from the function $V^{\rm C}$ defined in \eqref{VZdef}, but
because of its positivity and regularity on $W^{\rm C}$, it is amenable to an
$h$-transform of the random walk restricted to $W^{\rm C}$. This illustrates our
remark in the introduction: Not every $h$-transform of the random walk on a set
$W$ is equal to the conditional version of the walk given that it never leaves
$W$. The point is that $\widetilde V^{\rm C}$ does not necessarily govern the
upper tails of the exit time from $W^{\rm C}$, but $V^{\rm C}$ does; see
Theorem~\ref{thm-upptails}.

The idea of the construction of $\widetilde V^{\rm C}$ is to first condition
every component on staying positive and afterwards conditioning the resulting
walk on never violating the order of the components. In other words, we first
condition on never leaving $(0,\infty)^k$ and afterwards on never leaving
$W^{\rm A}$. Even though the intersection of these two sets is equal to $W^{\rm
C}$, there is no reason to hope that the sequentially conditioned random walk be
equal to the conditional walk constructed in Section~\ref{sec-CandD}; this is a
general fact about conditional probabilities.

Let us now describe the construction of $\widetilde V^{\rm C}$. For
$z\in(0,\infty)$, denote $V(z) = z - \mathbb{E}_z[S_1(\tau_z^{+,(1)})]$ with
$\tau_z^{+,(i)} = \inf\{n\in\N_0\colon S_i(n) \leq 0\}$ the exit time from
$(0,\infty)$. When we apply the method of proof of Denisov and Wachtel to $k=2$
under sufficient moment assumptions, we know that $V$ is positive and regular
for the restriction of a one-dimensional symmetric random walk
$(S_1(n))_{n\in\N_0}$ to $(0,\infty)$ and that it governs the upper tails of the
exit time $\tau_z^{+,(1)}$. By independence, $V^{\otimes k}$ is positive and
regular for the restriction of the walk $(S(n))_{n\in\N_0}$ to $(0,\infty)^k$
and governs the upper tails of the exit time $\tau_z^{+}= \inf\{n\in\N_0\colon
S(n) \notin (0,\infty)^k\}$. As a consequence,
\begin{equation}\label{condpos}
 \begin{aligned}
\widehat{\mathbb{P}}_x^{+}(S(n) \in \d y)
:&=\lim_{m\rightarrow\infty}\mathbb{P}_x(S(n)\in \d y\mid \tau_x^{+} > m)\\
& =  \mathbb{P}_x(S(n)\in \d y; \tau_x^{+} > n)\frac{V^{\otimes
k}(y)}{V^{\otimes k}(x)}.
\end{aligned}
\end{equation}
Under $\widehat{\mathbb{P}}_x^{+}$, the walk is equal to the conditional version
given that it does not leave $(0,\infty)^k$. Now we need the version of the
function $V^{\rm A}$ defined in \eqref{VAdef} for $\widehat{\mathbb{P}}_x^{+}$:
$$
V^{+,\rm A}(x)=h^{\rm A}(x)-\widehat{\mathbb{E}}_x^{+}[h^{\rm A}(S(\tau_x^{\rm
A}))],\qquad x\in W^{\rm A}.
$$

\begin{lemma} Assume that the step distribution of the walk is symmetric and
possesses finite moments of order $k-1$ for $k\geq 4$ or of some order $r > 2$
in cases $k=3$ and $k=2$. Then the function 
 $$
\widetilde V^{\rm C}=V^{+,\rm A} V^{\otimes k}
$$
is positive in  $W^{\rm C}$ and regular for the restriction of the transition
kernel to $W^{\rm C}$.
\end{lemma}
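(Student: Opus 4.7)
The proof naturally splits along the sequential conditioning motivating the definition: $\widetilde V^{\rm C}=V^{+,\rm A}\cdot V^{\otimes k}$ is the composition of two Doob $h$-transforms — first condition the walk on $(0,\infty)^k$ via $V^{\otimes k}$, then condition the resulting chain on $W^{\rm A}$ via $V^{+,\rm A}$ — so positivity and regularity of $\widetilde V^{\rm C}$ on $W^{\rm C}$ will follow from the corresponding properties of each factor on its own domain together with a single change-of-measure identity. The first ingredient is immediate: applying the one-dimensional Denisov--Wachtel theorem to each coordinate (valid under symmetry and the $r>2$ moment hypothesis) gives that $V$ is positive and regular on $(0,\infty)$ and vanishes on $(-\infty,0]$, and by independence of the coordinates $V^{\otimes k}$ is positive and regular on $(0,\infty)^k$ for the $k$-dimensional walk. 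Hence $\widehat{\mathbb{P}}_x^{+}$ is well defined and is the Markov chain whose components are independent one-dim walks conditioned to stay positive.

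The second and main ingredient is to show that $V^{+,\rm A}$ is positive and regular on $W^{\rm A}$ under $\widehat{\mathbb{P}}^+$. I would do this by running the Denisov--Wachtel program for the conditioned chain: identify the appropriate (sub/super)martingale structure of $h^{\rm A}(S(n))1_{\{T_x^{\rm A}>n\}}$ under $\widehat{\mathbb{P}}^+$ via the product independence of the coordinates and the symmetry of each one-dim conditioned walk; prove the uniform bound $\widehat{\mathbb{E}}_x^+[h^{\rm A}(S(n));\,T_x^{\rm A}>n]\le c\,h_2^{\rm A}(x)$ using the auxiliary Weyl chamber $W_{n,\epsilon}^{\rm A}$ and a one-dim CLT for the conditioned walk; identify the limit $\lim_n\widehat{\mathbb{E}}_x^+[h^{\rm A}(S(n));\,\tau_x^{\rm A}>n]$ with the formula $h^{\rm A}(x)-\widehat{\mathbb{E}}_x^+[h^{\rm A}(S(\tau^{\rm A}))]$ defining $V^{+,\rm A}$; and conclude strict positivity by the supermartingale argument underlying Proposition~\ref{posfin}(e).

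With both factors established, the lemma follows from a short change-of-measure computation. For $x\in W^{\rm C}$, since $V^{\otimes k}$ vanishes outside $(0,\infty)^k$ the event $\tau_x^{\rm C}>1$ inside the expectation reduces to $S(1)\in W^{\rm A}$; then applying first the defining $V^{\otimes k}$-$h$-transform and then the $\widehat{\mathbb{P}}^+$-regularity of $V^{+,\rm A}$ gives
\begin{align*}
\mathbb{E}_x\bigl[\widetilde V^{\rm C}(S(1));\,\tau_x^{\rm C}>1\bigr]
&=\mathbb{E}_x\bigl[V^{+,\rm A}(S(1))V^{\otimes k}(S(1));\,S(1)\in W^{\rm A}\bigr]\\
&=V^{\otimes k}(x)\,\widehat{\mathbb{E}}_x^+\bigl[V^{+,\rm A}(S(1));\,S(1)\in W^{\rm A}\bigr]\\
&=V^{\otimes k}(x)\,V^{+,\rm A}(x)=\widetilde V^{\rm C}(x).
\end{align*}
Positivity of $\widetilde V^{\rm C}$ on $W^{\rm C}\subset(0,\infty)^k\cap W^{\rm A}$ is just positivity of the two factors.

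The main obstacle is the second step: $\widehat{\mathbb{P}}^+$ is a Markov chain but not a random walk with i.i.d.\ increments, so the Denisov--Wachtel toolkit — especially the Major strong coupling of Lemma~\ref{lem-embedding}, the identification of the martingale part of $h^{\rm A}$, and the moment bookkeeping — has to be revisited in this setting. The product/independence structure lets one reduce every estimate coordinate-wise to the one-dim conditioned walk, whose Brownian analogue is the 3-dimensional Bessel process; Major-style couplings for the latter and the corresponding exit-time asymptotics are available, and the $r>2$ moment of each coordinate is preserved under the $V$-tilt, which explains the weaker moment regime of the present statement compared to Theorem~\ref{thm-upptails}.
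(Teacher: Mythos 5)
Your strategy matches the paper's. Both proofs reduce the lemma to the claim that the Denisov--Wachtel machinery applies to the conditioned chain $\widehat{\mathbb{P}}^{+}$, yielding that $V^{+,\rm A}$ is well-defined, positive, and regular for the restriction of the $\widehat{\mathbb{P}}^{+}$-transition kernel to $W^{\rm A}$; and both then deduce the lemma by a short change-of-measure computation. Your final chain of equalities is in fact a slightly more economical route than the paper's: the paper first identifies the $V^{+,\rm A}$-transform of $\widehat{\mathbb{P}}^{+}$ with a limit of $\tau^{\rm A}$-conditional probabilities and then argues that each side has total mass one, which implicitly uses the full tail asymptotics of $\tau^{\rm A}$ under $\widehat{\mathbb{P}}^{+}$, whereas your direct computation
$\mathbb{E}_x[\widetilde V^{\rm C}(S(1));\,\tau_x^{\rm C}>1]
=V^{\otimes k}(x)\,\widehat{\mathbb{E}}_x^{+}[V^{+,\rm A}(S(1));\,\tau_x^{\rm A}>1]
=\widetilde V^{\rm C}(x)$
needs only the regularity of $V^{+,\rm A}$.

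One caution about your sketch of the ``main obstacle.'' You appeal to ``the symmetry of each one-dim conditioned walk'' when setting up the submartingale $h^{\rm A}(S(n))1_{\{T_x^{\rm A}>n\}}$, but a one-dimensional walk conditioned to stay positive is \emph{not} symmetric, and in general $h^{\rm A}$ is no longer a martingale under $\widehat{\mathbb{P}}^{+}$: the coordinate increments are position-dependent and not centred, so the determinant-cancellation argument behind Proposition~\ref{mar} is unavailable. Consequently one cannot simply declare $h^{\rm A}(S(n))1_{\{T_x^{\rm A}>n\}}$ a $\widehat{\mathbb{P}}^{+}$-submartingale; running the Denisov--Wachtel program for $\widehat{\mathbb{P}}^{+}$ requires a martingale decomposition of $h^{\rm A}(S(n))$ (or an adaptation of their Markov-chain version) together with care about how the $V$-tilt affects the increment moments near the wall. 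To be fair, this is precisely the step the paper also leaves unproved --- it asserts the conditional-limit identity for $\widehat{\mathbb{P}}^{+}$ without argument --- so the omission is not a defect relative to the paper, but the particular mechanism you sketch for filling it would not work as stated.
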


\begin{proof}
The $h$-transform of $\widehat{\mathbb{P}}_x^{+}$ with $V^{+,\rm A}$ on $W^{\rm
A}$ is equal to the conditional version given that the walk does not leave
$W^{\rm A}$, i.e.,
\begin{equation}\label{condA}
\lim_{m\rightarrow\infty}\widehat{\mathbb{P}}_x^{+}(S(n)\in \d y\,|\,
\tau_x^{\rm A} > m)  =  \widehat{\mathbb{P}}_x^{+}(S(n)\in \d y; \tau_x^{\rm A}
> n)\frac{V^{+,\rm A}(y)}{V^{+,\rm A}(x)}.
\end{equation}
Using \eqref{condpos}, we see that
$$
\widehat{\mathbb{P}}_x^{+}(S(n)\in \d y; \tau_x^{\rm A} >
n)=\mathbb{P}_x(S(n)\in \d y; \tau_x^{+} > n,\tau_x^{\rm A} > n)\frac{V^{\otimes
k}(y)}{V^{\otimes k}(x)}.
$$
Using this in \eqref{condA} and noting that $\{\tau_x^{+} > n,\tau_x^{\rm A} >
n\}=\{\tau_x^{\rm C} > n\}$, we arrive at
$$
\begin{aligned}
\lim_{m\rightarrow\infty}\widehat{\mathbb{P}}_x^{+}(S(n)\in \d y\,|\,
\tau_x^{\rm A} > m)
&=  \mathbb{P}_x(S(n)\in \d y;  \tau_x^{\rm C} > n)\frac{V^{+,\rm
A}(y)}{V^{+,\rm A}(x)}\frac{V^{\otimes k}(y)}{V^{\otimes k}(x)}\\
&=\mathbb{P}_x(S(n)\in \d y;  \tau_x^{\rm C} > n)\frac{\widetilde V^{\rm
C}(y)}{\widetilde V^{\rm C}(x)}.
\end{aligned}
$$
Since the left hand side is a probability measure in $y\in W^{\rm C}$, the right
hand side is as well. This shows the regularity of $\widetilde V^{\rm C}$ in 
$W^{\rm C}$. The positivity is obvious.
\end{proof}

For the case D this approach does not work since it is not clear how to 
divide the condition in $W^{\rm D}$ into two conditions that can separately
be handled with the methods presented in this paper.
\newline
\newline
{\bf Acknowledgements}

P. Schmid would like to thank P. Ferrari and T. Sasamoto for helpful discussions.

\bigskip

{\sc Patrick Schmid}, Universit\"at Leipzig, Mathematisches Institut, Postfach
100920, D-04009 Leipzig, Germany,
\newline
{\tt Patrick.Schmid@math.uni-leipzig.de  }

\medskip

{\sc Wolfgang K\"onig}, Technical University Berlin, Str. des 17. Juni 136,
10623 Berlin, and Weierstra\ss\ Institute for Applied Analysis and Stochastics,
Mohrenstr. 39, 10117 Berlin, Germany
\newline
{\tt koenig@math.tu-berlin.de, koenig@wias-berlin.de}

\end{document}